\newtheorem{theorem}{Theorem}
\newtheorem{proposition}[theorem]{Proposition}
\newtheorem*{remark}{Remark}
\newtheorem*{ack}{Acknowledgements}
\newcommand{\Z}{\mathbb{Z}}
\newcommand{\F}{\mathbb{F}}
\newcommand{\Q}{\mathbb{Q}}
\newcommand{\C}{\mathbb{C}}
\renewcommand{\P}{\mathbb{P}}
\newcommand{\Gal}{{\rm Gal}}
\newcommand{\PSL}{{\rm PSL}}
\newcommand{\GL}{{\rm GL}}
\newcommand{\legen}[2]{\genfrac{(}{)}{}{}{#1}{#2}}
\newcommand{\im}{{\rm im}~}
\newcommand{\tr}{{\rm tr}~}
\newcommand{\sign}{{\rm sign}}
\begin{document}

\title[Trinomials]{Trinomials defining quintic number fields}
\author{Jesse Patsolic}
\author{Jeremy Rouse}
\address{Department of Mathematics, Wake Forest University, Winston-Salem, NC 27109}
\email{jesse.l.patsolic@alumni.wfu.edu}
\email{rouseja@wfu.edu}
\subjclass[2010]{Primary 11G30; Secondary 11D41, 11G05}

\begin{abstract}
Given a quintic number field $K/\Q$, we study the
set of irreducible trinomials, polynomials of the form $x^{5} + ax + b$, that
have a root in $K$. We show that there is a genus four curve $C_{K}$
whose rational points are in bijection with such trinomials. This curve
$C_{K}$ maps to an elliptic curve defined over a number field, and using
this map, we are able (in some cases) to determine all the rational points
on $C_{K}$ using elliptic curve Chabauty. 
\end{abstract}

\maketitle

\section{Introduction and Statement of Results}

Number fields definied by trinomials, polynomials of the shape, 
$f(x) = ax^{n} + bx + c$, have a long history. Two such trinomials
$f_{1}(x)$ and $f_{2}(x)$ are equivalent if $f_{2}(x) = \alpha f_{1}(\beta x)$
for some $\alpha, \beta \in \Q^{\times}$. If $n = 5$, the Galois group
of a trinomial is contained in $F_{20}$, the Frobenius group of order $20$,
if and only if it is equivalent to one of the form
\[
  (4u^{2} + 16) x^{5} + (5u^{2} - 5) x + (4u^{2} + 10u + 6),
\]
(see \cite{Weber} section 189, \cite{Matzat2}, pages 90-91, and \cite{SpearmanWilliams}). Moreover, the Galois group is contained
in the dihedral group of order $10$ if and only if $u = t - 1/t$.

Malle \cite{Malle} determined the family of degree $6$ trinomials with Galois 
group contained in $S_{5}$, namely $(125-u)x^{6} + 12u(u+3)^{2} x + u(u-5)(u+3)^{2}$. Many transitive subgroups of $S_{6}$ occur as Galois groups of degree
$6$ trinomials, including for example $\Z/6\Z$ which is the Galois group
of $f(x) = x^{6} + 133x + 209$. (This is the only sextic trinomial, up
to equivalence, with this Galois group, as shown in \cite{BremnerSpearman}.)

In 1969, Trinks discovered the trinomial $f(x) = x^{7} - 7x + 3$ which
was proven (by Matzat) to have Galois group
$\GL_{3}(\Z/2\Z) \cong \PSL_{2}(\Z/7\Z)$, the unique finite simple
group of order $168$.  In 1979, Erbach, Fischer, and McKay \cite{EFM}
found a second such trinomial, $f(x) = x^{7} - 154x + 99$. Finally, in
1999, Elkies and Bruin \cite{ElkiesBruin} proved that the set of such
trinomials with Galois group $\GL_{3}(\Z/2\Z)$ is in bijection with
the rational points on a genus $2$ curve, and there are exactly four
equivalence classes of such trinomials (including, of course, the two
mentioned above).  They also studied degree $8$ trinomials with Galois
group contained in $(\Z/2\Z)^{3} \rtimes \GL_{3}(\Z/2\Z)$ and found
four examples, including $x^{8} + 324x + 567$, whose Galois group is
$\GL_{3}(\Z/2\Z)$. The classification of finite simple groups puts a
lot of restrictions on the Galois group that an irreducible trinomial
$x^{n} + ax^{s} + b$ can have. In particular (see \cite{Feit}), if
$n > 11$ is prime then the Galois group $G$ is either solvable,
isomorphic to $A_{n}$ or $S_{n}$, or $n = 2^{e} + 1$ is a Fermat prime
and
$\GL_{2}(\F_{2^{e}}) \subseteq G \subseteq \F_{2^{e}}^{2} \rtimes
\GL_{2}(\F_{2^{e}})$.

In this paper, we instead study the following question. Fix a degree $5$
number field $K/\Q$. What are all the irreducible trinomials $x^{5} + ax + b$
that have a root in $K$? We make progress on answering this question.
The following result is a summary of what we accomplish.

\begin{theorem}
\label{main}
Let $K = \Q[\alpha]$ be a degree $5$ extension of $\Q$,
and let $g(x) \in \Q[x]$ be the minimal polynomial of $\alpha$. Then,
\begin{itemize}
\item There is a genus four curve $C_{K}/\Q$ whose rational points $C_{K}(\Q)$
are in bijection with equivalence classes of irreducible trinomials
$f(x) = x^{5} + ax + b$ so that $K \cong \Q[x]/(f(x))$. 
\item If $L$ is the smallest field over which $g(x)$ factors into a quadratic
and a cubic in $L[x]$, there is an elliptic curve $E/L$
and a degree $2$ map $\phi : C_{K} \to E$ defined over $L$. The curve
$E$ has $j$-invariant $-25/2$.
\item Assuming that the Galois closure of $K$ over $\Q$ has Galois group
$F_{20}$, $A_{5}$ or $S_{5}$ and $K/\Q$ is unramified at $2$ and $5$,
the root number of $E/L$ is even.
\item There is a degree $60$ map $\psi : E \to \P^{1}$ with the property
that if $P \in C_{K}(\Q)$, then $\psi \circ \phi(P) \in \P^{1}(\Q)$.
\end{itemize} 
\end{theorem}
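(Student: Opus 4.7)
The plan is to realize $C_K$ as a linear dependence locus in $\P(K) = \P^4$ over $\Q$, then exploit the factorization $K \otimes_\Q L \cong K_2 \times K_3$ (with $K_i = L[x]/g_i(x)$) to produce a degree two map to an elliptic curve over $L$.

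For the first bullet, the starting observation is that $x^5 + ax + b$ has a root $\beta \in K$ if and only if $1, \beta, \beta^5$ are $\Q$-linearly dependent in $K$, and in that case $(a,b)$ is determined up to scalar by the dependence relation. Writing $\beta$ in the $\Q$-basis $1, \alpha, \ldots, \alpha^4$, the $3 \times 3$ minors of the matrix whose columns are the coordinate vectors of $1, \beta, \beta^5$ give $\binom{5}{3} = 10$ polynomials of degree $6$ in the coefficients of $\beta$, cutting out $C_K \subset \P^4$. Rescaling $\beta \mapsto \lambda\beta$ sends the trinomial to $x^5 + \lambda^4 ax + \lambda^5 b$, which is exactly the defined equivalence on trinomials. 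Since $[K:\Q] = 5$ is prime, every $\beta \in K \setminus \Q$ generates $K$, so the resulting trinomial is automatically irreducible; one excises the single $\Q$-point $\beta \in \Q$ (i.e., $[1:0:0:0:0]$) to obtain the claimed bijection.

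For the second bullet, base change to $L$: a point becomes a pair $(\beta_2, \beta_3) \in K_2 \times K_3$, and the $\Q$-dependence of $1, \beta, \beta^5$ becomes a common $L$-linear dependence of the triples $(1, \beta_i, \beta_i^5)$ in each factor. In $K_2$ (of dimension $2$ over $L$) any three vectors admit a unique projective dependence, while in $K_3$ (of dimension $3$) dependence is a codimension-one condition. The projection $\phi(\beta_2, \beta_3) = \beta_3$ maps $C_K \otimes_\Q L$ to the plane sextic $D \subset \P(K_3) \cong \P^2_L$ defined by $\det[\,1 \mid \beta_3 \mid \beta_3^5\,] = 0$; the fiber over a generic $\beta_3 \in D$ consists of the two roots in $K_2$ of the quadratic factor of the trinomial, making $\phi$ degree $2$. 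The sextic has arithmetic genus $10$ but is singular at the point $\P(L) \subset \P(K_3)$ where all three columns become proportional, and I expect its normalization to be a genus one curve $E$. Putting $E$ in Weierstrass form by an explicit birational transformation should yield $j(E) = -25/2$; this value is universal because the defining equation for $D$ only reflects the formal problem of locating dependencies among $1, x, x^5$ in a generic cubic $L$-algebra, not $K$ itself. Riemann--Hurwitz applied to $\phi$ with the forced $6$ branch points then gives $g(C_K) = 4$.

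For the root number statement, write $w(E/L) = \prod_v w_v(E/L_v)$ as a product of local root numbers. The hypothesis that $K/\Q$ is unramified at $2$ and $5$, together with the observation that $L/\Q$ has degree $10$ in each of the three Galois cases, constrains the decomposition of $2$ and $5$ in $L$ and hence the local behavior of $E$ there; the remaining contributions come from archimedean places and from primes of bad reduction governed by the discriminant of $g$. The hardest part of the proof overall is pinning down these local root numbers precisely, since the Kodaira types and any wild contributions depend on detailed local structure of $g$ and must conspire across all the bad primes to give a global product of $+1$. For the last bullet, the composite $C_K \to \P^1$ sending $\beta$ to the trinomial invariant $a^5/b^4$ is manifestly defined over $\Q$ and has degree $5! = 120$, because a generic trinomial has $120$ orderings of its roots over $\overline{\Q}$. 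Since the two roots $\beta_2, \bar\beta_2$ of the quadratic factor $f_2$ yield the same trinomial, this composite is constant on the fibers of $\phi$ and therefore factors uniquely as $\psi \circ \phi$ for some $\psi : E \to \P^1$ of degree $60 = \binom{5}{2} \cdot 3!$; by construction $\psi \circ \phi$ is $\Q$-rational and so sends $C_K(\Q)$ into $\P^1(\Q)$.
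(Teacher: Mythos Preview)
Your overall strategy for bullets 1, 2 and 4 is essentially the paper's, but bullet 2 contains a concrete error and bullet 3 is a genuine gap.

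\textbf{Bullet 2.} Projecting $(\beta_2,\beta_3)\mapsto\beta_3$ is exactly the quotient by the involution $\tau$ the paper uses (the involution is the Galois conjugation on $K_2$), so the idea is right. But the image is \emph{not} your sextic $\det[\,1\mid\beta_3\mid\beta_3^5\,]=0$. That determinant factors as
\[
\det[\,1\mid\beta_3\mid\beta_3^5\,]\;=\;\det[\,1\mid\beta_3\mid\beta_3^2\,]\cdot\bigl(-(r^3-2rs+u)\bigr),
\]
where $x^3+rx^2+sx+u$ is the characteristic polynomial of $\beta_3$ on $K_3$; the first factor is the extraneous locus where $\beta_3$ fails to generate $K_3$, and only the second factor is the actual image of $\phi$. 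Indeed, writing the quintic as $(x^2+px+q)(x^3+rx^2+sx+u)$ and imposing the trinomial shape gives $p=-r$, $q=r^2-s$, and the single residual condition $r^3-2rs+u=0$, a \emph{cubic} in the linear coordinates of $\beta_3$. So the image is already a plane cubic of genus $1$; there is no singular sextic to normalize, and your genus heuristic (``arithmetic genus $10$ with a big singularity'') is aimed at the wrong curve. The paper reaches the same plane cubic by diagonalizing $\tau$ on $H^0(C_K,\Omega)$ and projecting to the three--dimensional eigenspace; your $K_2\times K_3$ picture is a pleasant coordinate--free way to see that projection.

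\textbf{Bullet 3.} Here you only outline a direct attack on the local root numbers and concede that the bad--prime contributions are the hard part. The paper does something quite different and avoids that difficulty entirely: it writes $L(E/L,s)$ as a product of twists $L(E_0\otimes\rho,s)$ for Artin representations $\rho$ of $\Gal(\overline\Q/\Q)$ coming from the permutation action on the roots, and then applies a global root--number formula (Dokchitser) for self--dual $\rho$ with conductor coprime to $N(E_0)=50$. In each of the cases $S_5$, $A_5$, $F_{20}$ the relevant twists pair off so that the signs cancel, giving $w_{E/L}=+1$ without ever computing a single local $\epsilon$--factor at a bad prime of $E/L$. Your proposed route, as written, does not close this gap.

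\textbf{Bullets 1 and 4.} Your determinantal description of $C_K$ is set--theoretically correct but gives degree--$6$ equations; the paper instead sets the $x^4,x^3,x^2$ coefficients of the characteristic polynomial to zero, obtaining one linear, one quadratic and one cubic equation and hence the canonical model of a genus--$4$ curve directly, without appealing to Riemann--Hurwitz. Your treatment of bullet 4 (the degree--$120$ map $\beta\mapsto a^5/b^4$ factoring through $\phi$) matches the paper's argument.
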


\begin{remark}
  Since $C_{K}$ has genus four, Faltings's theorem \cite{Faltings}
  proves that $C_{K}(\Q)$ is finite, and so there are only finitely
  many irreducible trinomials (up to equivalence) with a root in
  $K$. Moreover, Caporaso, Harris and Mazur showed \cite{CHM} that the
  weak Lang conjecture (that the set of rational points on a variety
  of general type is not Zariski dense) implies a bound on the
number of rational points on a curve of genus $g$ depending only on $g$.
\end{remark}

The map $\psi : E \to \P^{1}$ above provides a convenient way to study
the rational points on $C_{K}$.  In particular, if it is possible to
find generators for $E(L)$, then the method of elliptic curve Chabauty
(see \cite{ECChab}) allows one to provably determine all of the
rational points on $C_{K}$.

\begin{theorem}
\label{examples}
\begin{enumerate}
\item Let $f(x) = x^{5} - 5x + 12$, and $\alpha$ be a root of $f(x)$.
The field $K = \Q[\alpha]$ is a number field whose Galois closure
has dihedral Galois group. Up to equivalence, the only trinomial with
a root in $K$ is $f(x)$.
\item Assume the generalized Riemann hypothesis. Let
  $K = \Q[\sqrt[5]{18}]$. Up to equivalence, the only trinomials
with a root in $K$ are the following:
\begin{align*}
  f_{1}(x) &= x^{5} - 18\\
  f_{2}(x) &= x^{5} - 324\\
  f_{3}(x) &= x^{5} - 24\\
  f_{4}(x) &= x^{5} - 432\\
  f_{5}(x) &= x^{5} + 750x + 3750.
\end{align*}
The fifth trinomial above is mentioned in Example 3.1 of \cite{SW2}, page 385.
\end{enumerate}
\end{theorem}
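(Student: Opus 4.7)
The plan is to apply Theorem \ref{main} separately to each of the two fields, compute the genus-$4$ curve $C_K/\Q$, the elliptic curve $E/L$, the double cover $\phi:C_K\to E$, and the degree-$60$ map $\psi:E\to\P^1$, and then apply elliptic curve Chabauty (\cite{ECChab}) via $\psi$ to enumerate $C_K(\Q)$, which by Theorem \ref{main} is in bijection with the equivalence classes of irreducible trinomials with a root in $K$.

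For part (1), the polynomial $g(x)=x^5-5x+12$ has dihedral Galois closure, and in $D_{10}$ acting on five points no subgroup has an orbit of size $2$ together with an orbit of size $3$. Hence any field in which two roots of $g$ lie must already contain the splitting field, so $L$ is the degree-$10$ Galois closure of $K$. I would compute explicit equations for $C_K$, $E$, $\phi$ and $\psi$ by specializing the construction of Theorem \ref{main}, locate rational points on $E(L)$ (the trinomial $x^5-5x+12$ itself supplies one), perform a two-descent over $L$ to compute $\operatorname{rank} E(L)$ and generators modulo torsion, verify the Chabauty condition $\operatorname{rank} E(L)<[L:\Q]=10$, and run elliptic curve Chabauty through $\psi$ to pin down $C_K(\Q)$, which should contain exactly one equivalence class.

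For part (2), $K=\Q[\sqrt[5]{18}]$ has Galois closure $F_{20}$, and the same orbit analysis forces $L=\Q(\zeta_5,\sqrt[5]{18})$ of degree $20$. The five given trinomials $f_1,\ldots,f_5$ furnish five known rational points on $C_K$, hence five points on $E(L)$ via $\phi$; I would execute the same procedure to show that Chabauty produces no further rational points on $C_K$. The generalized Riemann hypothesis enters when computing the class group and unit group of the degree-$20$ field $L$ unconditionally (needed for the $2$-descent and for Mordell--Weil saturation), and when handling the analogous data for the quadratic extensions of $L$ that arise as $2$-covering fields.

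The main obstacle is the Mordell--Weil computation for $E$ over $L$, especially in part (2) where $[L:\Q]=20$; two-descents at this size sit at the edge of current computational feasibility, which is precisely why GRH must be invoked. A secondary concern is the Chabauty rank hypothesis $\operatorname{rank} E(L)<[L:\Q]$: the even-root-number part of Theorem \ref{main} combined with the parity conjecture predicts that $\operatorname{rank} E(L)$ is even, and one hopes in the concrete cases at hand that it is small enough for the method to apply; otherwise one must pass to further $2$-coverings or exploit another factorization pattern. Once generators of $E(L)$ are in hand the Chabauty step is routine: choose a prime $\mathfrak{p}$ of $L$ of good reduction above a suitable rational prime $p$, work $p$-adically with the formal group and the pull-back of $\psi$ to cut out the finite set $\phi(C_K(\Q))\subset E(L)$, pull back to $C_K$, and match each surviving point against the listed trinomials.
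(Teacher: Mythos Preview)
Your overall strategy---specialize Theorem~\ref{main}, compute $C_K$, $E/L$, $\phi$, $\psi$, and then run elliptic curve Chabauty on $\psi$---is exactly the paper's approach. But two concrete errors would derail the execution.

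\textbf{You have misidentified $L$ in both parts.} The field $L$ in Theorem~\ref{main} is the smallest field over which $g(x)$ has a factorization of shape $(\text{degree }2)\cdot(\text{degree }3)$; neither factor is required to be irreducible. In part~(1), the stabilizer of a vertex in $D_{10}$ (a reflection) has orbit type $1{+}2{+}2$, so over $K$ itself $g$ already splits as $(\text{linear})(\text{quadratic})(\text{quadratic})$; grouping a quadratic with the linear factor gives the needed $2\times 3$ split, and hence $L=K$ has degree $5$, not $10$. In part~(2), the order-$2$ subgroup of $F_{20}$ likewise has orbit type $1{+}2{+}2$, and its fixed field is $K(\sqrt{5})=\Q(\sqrt[5]{18},\sqrt{5})$ of degree $10$, not the full degree-$20$ closure $\Q(\zeta_5,\sqrt[5]{18})$. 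Your orbit argument only rules out an \emph{irreducible} $2\times 3$ factorization. Working over the wrong $L$ would make the $2$-descent in part~(2) far harder than necessary; the paper's GRH-conditional $2$-descent is already a 140-hour computation over the degree-$10$ field.

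\textbf{You are missing the key obstacle in part~(2).} The five rational points of $C_K$ map to a subgroup of $E(L)$ of rank only $1$, while the $2$-Selmer rank is $2$. With a rank-$1$ subgroup in hand and Selmer rank $2$, you can neither certify the rank nor run Chabauty, and direct point search over a degree-$10$ field is hopeless. The paper's way out is to observe that $C_K$ has eight automorphisms defined over $L$; applying these to the known rational points yields sixteen $L$-points on $C_K$, whose images on $E$ generate the full rank-$2$ Mordell--Weil group $E(L)\cong\Z^2\times\Z/5\Z$. Only then does elliptic curve Chabauty go through. Your proposal does not anticipate this rank shortfall or the automorphism trick that resolves it.
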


\begin{remark}
  The main difficulty in applying elliptic curve Chabauty is finding
  generators for $E(L)$, because point searching over number fields of
  degree $10$ is very difficult. The easiest way to arrange this is to
  find lots of points on $E$ by taking images of rational point on
  $C_{K}$.  A priori, elliptic curve Chabauty can be successful if
  ${\rm rank}~E(L) < [L : \Q]$. However, the map $C_{K}/\Q \to E/L$
  gives a map from ${\rm Jac}(C) \to {\rm Res}_{L/\Q}(E)$. If the rank
  of ${\rm Jac}(C)(\Q)$ is greater than $3$, then elliptic curve
  Chabauty will not succeed. (This was explained to the authors by
  Nils Bruin.)  In the case that $K = \Q[\alpha]$, where $\alpha$ is a
  root of $f(x) = x^{5} + 2x^{4} - 11x^{3} - 17x^{2} + 26x + 55$,
  there are five obvious rational points on $C_{K}$ and (assuming GRH)
  the rank of ${\rm Jac}(C)$ is equal to that of ${\rm Res}_{L/\Q}(E)$
  which is equal to $4$ (as confirmed by a long $2$-descent
  computation). In this case, elliptic curve Chabauty will not be
  successful.
\end{remark}

\begin{remark}
The largest size of $C_{K}(\Q)$ we have found is $8$. In particular,
if $K = \Q[\alpha]$, where $\alpha$ is a root of 
$f_{1}(x) = x^{5} + 75x + 105$, then $K$ also contains roots of
\begin{align*}
  f_{2}(x) &= x^{5} - 75x + 465\\
  f_{3}(x) &= x^{5} - 1125x + 3825\\
  f_{4}(x) &= x^{5} - 2025x + 65205\\
  f_{5}(x) &= x^{5} + 2025x + 10665\\
  f_{6}(x) &= x^{5} - 10125x + 83025\\
  f_{7}(x) &= x^{5} + 28125x - 39375\\
  f_{8}(x) &= x^{5} - 3410625x + 86685375.
\end{align*}
\end{remark}

The example above raises the following question: what is the largest
number of rational points $C_{K}$ can have? Are there families of
quintic fields that each have many trinomials?

In \cite{SW2}, Spearman and Williams gave the example
that if $r \ne 0, \pm 1$ is rational, and $\alpha^{5} = r^{3} (r+1)(r-1)^{4}$,
then $\Q[\alpha]$ also contains the roots of $x^{5} + ax + b$, where
\begin{align*}
  a &= \frac{-80r(r^{2}-1)(r^{2}+r-1)(r^{2}-4r-1)}{(r^{2}+1)^{4}}\\
  b &= \frac{-32r(r^{2}-1)(r^{4}+22r^{3}-6r^{2}-22r+1)}{(r^{2}+1)^{4}}.
\end{align*}
We give a different one-parameter family of number fields that each
have (at least) two trinomials $x^{5} + ax + b$ in them with $a$ and
$b$ both nonzero.

\begin{theorem}
\label{family}
Let $a \in \Q$ with $a \ne -8$. Suppose that
\[
  f(x) = (4a + 32)x^{5} + (-5a^{2} + 5a)x - a^{3} + a^{2}
\]
is irreducible. Let $\alpha$ be a root of $f(x)$. Then the number
\[
  \frac{1}{a^{2}+4a-8} \left[ \frac{4a^{2} + 16a - 128}{a^{2} - a} \alpha^{4}
  + \frac{8a+64}{a} \alpha^{3} + (-2a-16) \alpha^{2} + (2a + 4) \alpha - 4a + 16 \right]
\]
is a root of
\[
  (a^{3} + 7a^{2} - 8a) x^{5} + (10a^{2} + 115a - 125)x + 2a^{2} - 76a - 250.
\]
\end{theorem}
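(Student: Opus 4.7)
The statement is a purely algebraic identity inside the quintic field $K = \Q(\alpha)$, so my plan is direct verification. Write $\beta$ for the displayed element of $K$ and
\[
  g(x) = (a^{3} + 7a^{2} - 8a) x^{5} + (10a^{2} + 115a - 125) x + (2a^{2} - 76a - 250).
\]
I want to show $g(\beta) = 0$ as an element of $\Q(a)[\alpha]/(f(\alpha))$.

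The relation $f(\alpha) = 0$ rearranges to
\[
  \alpha^{5} \;=\; \frac{a(a-1)(5\alpha + a)}{4(a + 8)},
\]
which reduces any power $\alpha^{n}$ with $n \ge 5$ to a $\Q(a)$-linear combination of $\{1, \alpha, \alpha^{2}, \alpha^{3}, \alpha^{4}\}$. First I will compute $\beta^{2}, \beta^{3}, \beta^{4}, \beta^{5}$ in this basis step by step, applying the reduction rule after each multiplication, then substitute into $g(\beta)$ and check that each of the five resulting coefficients vanishes identically as a rational function of $a$. This is a finite, mechanical computation which I will hand to a computer algebra system; the factorizations $4a + 32 = 4(a + 8)$ and $-a^{3} + a^{2} = -a^{2}(a - 1)$, together with the clean form of $\beta$ in which $(a + 8)$ divides three of its five coefficients after clearing denominators, suggest that intermediate expressions remain manageable.

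The conceptually interesting issue, and the real obstacle, is not the verification itself but the discovery of the pair $(f, \beta)$. The trinomial $f(x)$ belongs, up to the equivalence $f_{2}(x) = \alpha f_{1}(\beta x)$, to the Weber family recalled in the introduction, so its Galois group lies in $F_{20}$ and $K$ carries the extra symmetry that should account for a second trinomial. Concretely, one expects $\beta$ to arise as follows: Theorem~\ref{main} furnishes, for each admissible $a$, a genus-four curve $C_{K_{a}}$ whose rational points parametrize trinomials with a root in $K_{a}$. The tautological trinomial $f(x)$ contributes one rational section, and the hard part will be locating a second rational section of the family $\{C_{K_{a}}\}_{a}$, for example by searching on a specialization of $a$ and then fitting a polynomial in $a$ to the resulting root's coordinates in the basis $\{1, \alpha, \alpha^{2}, \alpha^{3}, \alpha^{4}\}$. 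Once such a section is found, its root is exactly the $\beta$ in the statement, and the direct computation above finishes the proof.
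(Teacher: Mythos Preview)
Your proof plan---reduce $g(\beta)$ modulo $f(\alpha)$ in the basis $\{1,\alpha,\alpha^2,\alpha^3,\alpha^4\}$ and check that all five coefficients vanish as rational functions of $a$---is exactly what the paper intends: it explicitly says ``We omit the proof of this theorem, as it is a straightforward calculation.'' So the verification strategy is correct and matches the paper.

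One correction to your surrounding commentary, though it does not affect the proof: the trinomial $f(x)$ is \emph{not} in the Weber $F_{20}$ family, and for generic $a$ the Galois group is $S_5$, not a subgroup of $F_{20}$. The paper's actual discovery mechanism (Section~6) is that the family in Theorem~\ref{family} comes from a rational curve on a K3 surface $X$ whose points parametrize quintic fields admitting two inequivalent trinomials; no special Galois-theoretic symmetry of $K$ is invoked. Your alternative heuristic (specialize $a$, search $C_{K_a}(\Q)$, interpolate) would also plausibly find it, but the $F_{20}$ explanation is incorrect.
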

(We omit the proof of this theorem, as it is a straightforward calculation.)

In Section 2, we recall background on trinomials, the canonical ring
of a curve, root numbers of elliptic curves, and elliptic curve
Chabauty. In Section 3, we prove parts 1 and 2 of
Theorem~\ref{main}. In Section 4, we prove parts 3 and 4 of
Theorem~\ref{main}. In Section 5, we give prove
Theorem~\ref{examples}. In Section 6, we discuss a K3 surface $X$
whose rational points correspond to number fields $K$ that have more
than one trinomial (up to equivalence) defining
them. Theorem~\ref{family} results from a rational curve on this
surface.

\begin{ack}
  We used Magma \cite{Magma} version V2.21-8 for computations, and the
  computations were run on an Intel Xeon W3565 3.2GHz processor. A
  portion of the work in this paper constitued the master's thesis of
  the first author.
\end{ack}

\section{Background}

If $f_{1}(x) = x^{5} + ax + b$ and $f_{2}(x) = x^{5} + cx + d$ are two
trinomials, we say that $f_{1}$ and $f_{2}$ are equivalent
if there is a rational number $\alpha \ne 0$ so that $\frac{1}{\alpha^{5}}
f_{1}(\alpha x) = f_{2}(x)$. If $b$ and $d$ are not zero,
we have that $f_{1}(x)$ is equivalent to $f_{2}(x)$ if and only if
$a^{5}/b^{4} = c^{5}/d^{4}$. This motivates the definition of
the parameter $t = a^{5}/b^{4}$, defined on trinomials 
$f_{1}(x) = x^{5} + ax + b$. If $a \ne 0$ and $b \ne 0$, note that
$f_{1}(x)$ is equivalent to $x^{5} + tx + t$.

If $C$ is a curve of genus $g \geq 2$ with $\Omega$
the sheaf of holomorphic differential 1-forms on $C$, the canonical
ring is
\[
  R(C) = \oplus_{d=0}^{\infty} H^{0}(C,\Omega^{\otimes d}).
\]
We have that $C \cong {\rm Proj}~R(C)$, as the canonical divisor
is ample. We will use the canonical ring of our genus four
curves $C_{K}$ and an automorphism $\tau : C \to C$ of order $2$
to construct the curve quotient $C_{K}/\langle \tau \rangle$.

If $E/K$ is an elliptic curve defined over a number field $K$, the
root number of $E/K$, $w_{E/K}$ is defined in terms of local
$\epsilon$-factors corresponding to representations of local
Weil-Deligne groups. (See \cite{VDok} for more details.) The number
$w_{E/K}$ is conjecturally the sign of the functional equation of the
$L$-function $L(E/K,s)$. Since the Birch and Swinnerton-Dyer
conjecture predicts that the rank of $E(K)$ is equal to the order of
vanishing of $L(E/K,s)$ at $s = 1$, we can conjecturally predict the
parity of the rank of $E/K$ by determining $w_{E/K}$. If $E/K$ is an
elliptic curve and $\rho : \Gal(\overline{\Q}/K) \to \GL_{n}(\C)$ is an
Artin representation, we define $L(E \otimes \rho, s)$ to be the
$L$-function of ${\rm tw}_{\rho}(E)$, the twist of $E$ by
$\rho$ (see \cite{VDok}, Section 3 for details about this definition). 

The technique we will use for provably finding all the rational points
on our genus four curves is elliptic curve Chabauty. This technique
was developed theoretically by Nils Bruin (see \cite{ECChab}) and
implemented in Magma.  The setup for elliptic curve Chabauty is the
following. Given a curve $C/\Q$, it is necessary to have a map
$\Phi : C \to \P^{1}$ defined over $\Q$ that factors as
$\Phi = \pi \circ \varphi$, where $\pi : C \to E$ is a map to an
elliptic curve defined over a number $K$ field, and
$\varphi : E \to \P^{1}$. Elliptic curve Chabauty is sometimes able to
show that $\{ P \in E(K) : \varphi(P) \in \P^{1}(\Q) \}$ is finite and
determine its elements.

\section{The curve $C_{K}$}
\label{crv}

Fix a degree five number field $K/\Q$ with $K =
\Q[\alpha]$.
Trinomials with a root in $K$ are in bijection with elements
$\beta = a + b \alpha + c \alpha^{2} + d \alpha^{3} + e \alpha^{4}$
with the property that the characteristic polynomial of the linear
transformation $T_{\beta} : K \to K$ given by $T_{\beta}(x) = \beta x$
has the form $x^{5} + rx + s$ for $r, s \in \Q$. By representing the
transformation $T_{\beta}$ in terms of the basis
$\{ 1, \alpha, \alpha^{2}, \alpha^{3}, \alpha^{4} \}$, we obtain three
equations involving $a$, $b$, $c$, $d$, and $e$ that express this
condition. For the rest of this section, we will focus on the
important special case that $\alpha$ is a root of $x^{5} + tx + t$.

\begin{proposition}
\label{crveq}
Suppose that $K = \Q[\alpha]$, where $\alpha$ is a root of $x^{5} + tx + t$.
Then the trinomials in $\Q[x]$ with a root in $K$ (up to equivalence)
are in bijection with the rational points on $C_{K}(\Q)$, where $C_{K} \subseteq \P^{3}$ is
defined by
\begin{align*}
  & -5a^{2} + 50ab + 32tbd + 16tc^{2} + 40tcd = 0\\
  & -10a^{3} + 25a^{2}b - 125a^{2}c - 160tacd - 100tad^{2}\\ 
  &+ 64tb^{2} c + 80tb^{2} d + 80tbc^{2} - 64t^{2}cd^{2} - 48t^{2} d^{3} = 0.
\end{align*}
\end{proposition}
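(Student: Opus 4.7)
The plan is to translate the condition that the characteristic polynomial of $T_\beta$ has the form $x^5 + rx + s$ into explicit polynomial equations in the coordinates $(a,b,c,d,e)$ of $\beta = a + b\alpha + c\alpha^2 + d\alpha^3 + e\alpha^4$. The characteristic polynomial of $T_\beta$ is monic of degree five, and the coefficient of $x^{5-k}$ is $(-1)^k e_k$, where $e_k$ is the $k$-th elementary symmetric function of the eigenvalues. Thus the trinomial condition is exactly $e_1 = e_2 = e_3 = 0$, and by Newton's identities this is equivalent to the vanishing of the power sums $p_k = \tr(T_\beta^k) = \tr(\beta^k)$ for $k = 1, 2, 3$.

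To carry out the computation I would first apply Newton's identities directly to $x^5 + tx + t$ to record $\tr(\alpha^j) = 0$ for $j=1,2,3$, $\tr(\alpha^4) = -4t$, and $\tr(\alpha^5) = -5t$; further traces of pure powers follow by multiplying $\alpha^5 = -t\alpha - t$ through by $\alpha^k$ and taking traces. Expanding $\beta^k$ in the basis $\{1, \alpha, \alpha^2, \alpha^3, \alpha^4\}$, reducing higher powers via $\alpha^5 = -t\alpha - t$, and taking traces term by term will yield $p_1 = 5a - 4te$ together with explicit quadratic and cubic forms for $p_2$ and $p_3$. Since $t \neq 0$, the equation $p_1 = 0$ solves uniquely as $e = 5a/(4t)$. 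Substituting this value into $p_2 = 0$ and $p_3 = 0$ and clearing denominators produces, by direct calculation, precisely the two homogeneous forms displayed. Eliminating $e$ realizes $C_K$ inside $\P^3$ with coordinates $[a:b:c:d]$; as a complete intersection of a quadric and a cubic, adjunction gives arithmetic genus $\frac{1}{2} \cdot 2 \cdot 3 \cdot (2+3-4) + 1 = 4$, matching Theorem~\ref{main}.

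For the bijection I would reason as follows. If $\beta \in K^\times$ is a root of $x^5 + rx + s$, then for each $\lambda \in \Q^\times$ the element $\beta/\lambda$ is a root of the equivalent trinomial $x^5 + (r/\lambda^4)x + s/\lambda^5$, so passing to equivalence classes on the trinomial side corresponds to passing to $\Q^\times$-orbits on $K^\times$. Since the only proper subfield of $K$ is $\Q$, and a nonzero rational scalar $\beta$ has characteristic polynomial $(x-\beta)^5$ which is not of trinomial form, any $\beta \in K$ whose characteristic polynomial has the shape $x^5 + rx + s$ necessarily generates $K$ and has that trinomial as its minimal polynomial (so the trinomial is automatically irreducible). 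Thus equivalence classes of trinomials with a root in $K$ are in bijection with $\Q^\times$-orbits of nonzero $\beta \in K$ solving $p_1 = p_2 = p_3 = 0$, and hence with points of $C_K(\Q)$.

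The main obstacle is the bookkeeping: $p_3$ is a cubic form in five variables whose coefficients are polynomials in $t$, and expanding $\beta^3$ and then reducing $\alpha^5, \ldots, \alpha^{12}$ modulo $\alpha^5 + t\alpha + t$ is lengthy and error-prone by hand. I would verify the final polynomial identities using a computer algebra system.
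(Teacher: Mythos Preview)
Your proposal is correct and follows essentially the same route as the paper: set up $\beta$ in the power basis, impose vanishing of the coefficients of $x^{4}$, $x^{3}$, $x^{2}$ in the characteristic polynomial of $T_{\beta}$, use the linear relation to eliminate $e$, and observe that projective scaling of $(a:b:c:d)$ corresponds to trinomial equivalence. The only cosmetic difference is that you phrase the vanishing conditions via Newton's identities as $p_{1}=p_{2}=p_{3}=0$ rather than $e_{1}=e_{2}=e_{3}=0$; since these are equivalent once $p_{1}=0$, this is the same computation, and your added remarks on why any such $\beta$ must generate $K$ and on the genus via adjunction are welcome details the paper leaves implicit.
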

\begin{proof}
Applying the procedure above gives the equation $4te = 5a$ from the
assumption that the coefficient of $x^{4}$ in the characteristic polynomial
of $T_{\beta}$ is zero. Plugging this into the equations that result
from the vanishing of the coefficients of $x^{3}$ and $x^{2}$ yield the
equations above. Finally, if $(a_{1} : b_{1} : c_{1} : d_{1}) = \lambda (a_{2} : b_{2} : c_{2} : d_{2})$ are two equivalent points on $C_{K}$, the
corresponding trinomials are related by $f_{1}(x) = \lambda^{5} f_{2}(x/\lambda)$,
and are hence equivalent.
\end{proof}

\begin{remark}
The curve $C_{K}$ is isomorphic over the Galois closure of $K/\Q$ to
the curve $B$ defined by
\begin{align*}
  x_{1} + x_{2} + x_{3} + x_{4} + x_{5} &= 0\\
  x_{1}^{2} + x_{2}^{2} + x_{3}^{2} + x_{4}^{2} + x_{5}^{2} &= 0\\
  x_{1}^{3} + x_{2}^{3} + x_{3}^{3} + x_{4}^{3} + x_{5}^{3} &= 0,\\
\end{align*}
via the map $x_{i} = a + b \alpha_{i} + c \alpha_{i}^{2} + d \alpha_{i}^{3} +
e \alpha_{i}^{4}$, where $\alpha_{1}, \ldots, \alpha_{5}$ are the conjugates
of $\alpha$. This curve is known as Bring's curve after the work
of Erland Samuel Bring (in 1786, as reported by Felix Klein on page 157 of 
\cite{Klein}) on reducing a general quintic to one of the
form $x^{5} + ax + b$. This curve also arose in the work of A. Wiman
(see \cite{Edge}).

The curve $B$ has $120$ automorphisms, and this is the largest number
possible for a curve of genus $4$. Moreover, any genus $4$ curve with
$120$ automorphisms is isomorphic (over $\C$) to $B$ (see
\cite{Breuer}). The Jacobian of $B$ is isogenous to $E_{0}^{4}$, where
$E_{0} : y^{2} = x^{3} - 675x - 79650$ (see Section 8.3.2 of
\cite{Serre}).
\end{remark}

To obtain equations for the map from $C_{K}$ to an elliptic curve $E$,
we first compute an automorphism of $\tau$ of $C_{K}$. This
automorphism is obtained as a composition of
$C_{K} \to B \to B \to C_{K}$, where the map from $B \to B$ is
$(x_{1} : x_{2} : x_{3} : x_{4} : x_{5}) \mapsto
(x_{2} : x_{1} : x_{3} : x_{4} : x_{5})$.
This automorphism of $C_{K}$ is the restriction to $C_{K}$ of an
automorphism of $\P^{3}$ and hence can be represented by a
$4 \times 4$ matrix. This automorphism is defined over the smallest
field $L = \Q[z]$ over which $f(x)$ factors as $g(x) h(x)$, where
$g(x)$ has the roots $\alpha_{1}$ and $\alpha_{2}$. In general, $L/\Q$
has degree $10$, and $z$ is a root of the polynomial
\[
  x^{10} - 3tx^{6} - 11tx^{5} - 4t^{2} x^{2} + 4t^{2}x - t^{2}.
\]
We have obtained the entries of the matrix giving the automorphism
as elements of $L[t]$ - they are too cumbersome to reproduce here.

Next, we will give a method to compute the quotient curve
$C_{K}/\langle \tau \rangle$. This procedure will work on any
non-hyperelliptic genus $4$ curve with an involution $\tau$ so that
the quotient $C_{K}/\langle \tau \rangle$ has genus $1$. Identify
$C_{K}$ with its canonical embedding in $\P^{3}$. By Proposition 2.6
of \cite{Miranda} (page 207), the canonical image of a non-hyperelliptic
genus $4$ curve is the complete intersection of a quadric and
a cubic, $C_{K} : p_{2}(a,b,c,d) = p_{3}(a,b,c,d) = 0$. We consider
the action of $\tau$ on $H^{0}(C,\Omega)$, the $4$-dimensional space
of holomorphic $1$-forms. It is well-known (see Theorem~7.5 on page 27 of
\cite{Breuer}) that the subspace of $H^{0}(C,\Omega)$ fixed by $\tau$
is equal to the genus of the quotient curve. Under the assumption that
$C_{K}/\langle \tau \rangle$ has genus $1$, it follows that $\tau$
(thought of as a linear map $H^{0}(C,\Omega) \to H^{0}(C,\Omega)$) has
a one-dimensional $1$-eigenspace, and hence a $3$-dimensional
$-1$-eigenspace. In general, we let $H^{0}(C,\Omega^{\otimes i})^{+}$
(respectively $H^{0}(C,\Omega^{\otimes i})^{-}$) denote the $+$ and $-$
eigenspaces of $\tau$ acting on $H^{0}(C,\Omega^{\otimes i})$.
Let $\{ e_{1} \}$ be a basis for $H^{0}(C,\Omega)^{+}$ and
$\{ e_{2}, e_{3}, e_{4} \}$ be a basis for $H^{0}(C,\Omega)^{-}$.

\begin{proposition}
Assume the notation above. The morphism $\phi : C \to \P^{2}$ given by
$\phi(a : b : c : d) = (e_{2} : e_{3} : e_{4})$ is the quotient
map. The image of $\phi$ is a cubic curve.
\end{proposition}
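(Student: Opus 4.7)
My plan is to recognize $\phi$ as the composition of the quotient map $\pi : C \to E$ with the embedding of $E$ into $\P^2$ by a complete linear system of degree $3$. First, the eigenvalue conditions $\tau^* e_1 = e_1$ and $\tau^* e_i = -e_i$ for $i = 2, 3, 4$ imply that on the canonical model $C \subseteq \P^3$ the involution $\tau$ acts as the projective transformation $(a : b : c : d) \mapsto (-a : b : c : d)$, so the projection $\phi = [e_2 : e_3 : e_4]$ from the isolated fixed point $(1 : 0 : 0 : 0)$ is $\tau$-invariant on $C$ and factors as $\phi = \bar\phi \circ \pi$ for a unique morphism $\bar\phi : E \to \P^2$.

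Next I would identify the linear system defining $\bar\phi$. Applying Riemann--Hurwitz to the degree-$2$ cover $\pi$ with $g(C) = 4$ and $g(E) = 1$, the ramification divisor $R \subset C$ has degree $6$ and the branch divisor $B \subset E$ has degree $6$. The structure theory of double covers produces a line bundle $L$ on $E$ with $L^{\otimes 2} \cong \mathcal{O}_E(B)$ and a $\tau$-eigenspace decomposition $\pi_* \mathcal{O}_C \cong \mathcal{O}_E \oplus L^{-1}$; by Grothendieck duality (or a direct projection-formula computation using $\Omega_C \cong \pi^*\Omega_E \otimes \mathcal{O}_C(R)$), this dualizes to $\pi_*\Omega_C \cong \Omega_E \oplus (\Omega_E \otimes L)$, with $\Omega_E$ the $+1$-eigenspace and $\Omega_E \otimes L$ the $-1$-eigenspace. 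Using $\Omega_E \cong \mathcal{O}_E$ and taking global sections, we obtain a canonical identification $H^0(C, \Omega)^- \cong H^0(E, L)$; by Riemann--Roch on the elliptic curve $E$ this space has dimension $\deg L = 3$, matching the $-1$-eigenspace as expected.

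Under this identification the forms $e_2, e_3, e_4$ pull back from a basis of $H^0(E, L)$, so $\bar\phi$ is the morphism defined by the complete linear system $|L|$. Since $\deg L = 3 \geq 2g(E) + 1$, the bundle $L$ is very ample, and $|L|$ realizes $E$ as a smooth plane cubic in $\P^2$; hence $\phi$ is the quotient map $\pi$ composed with this closed embedding, and its image $\phi(C) = \bar\phi(E)$ is a cubic curve. The main technical point will be pinning down the $\tau$-eigenspace decomposition of $\pi_* \Omega_C$ with the correct assignment of $+$/$-$ summands, which rests on analyzing the $\mathcal{O}_E$-algebra structure of $\pi_* \mathcal{O}_C$; once this structural input is in place, the proposition follows from Riemann--Roch on the elliptic curve $E$.
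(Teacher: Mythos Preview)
Your argument is correct, but it takes a genuinely different route from the paper's proof. The paper argues entirely in terms of the canonical ring: after noting that $\tau$ acts on $H^{0}(C,\Omega)$ with eigenvalues $+1,-1,-1,-1$, it shows directly that the unique quadric $p_{2}$ through $C$ must lie in the $+$--eigenspace of $\tau$ on ${\rm Sym}^{2}H^{0}(C,\Omega)$ (otherwise $e_{1}\mid p_{2}$), deduces that the coefficient of $e_{1}^{2}$ in $p_{2}$ is nonzero (else $\phi$ would be degree $1$, contradicting $\phi\circ\tau=\phi$), and hence that $\phi$ is a degree--$2$ morphism factoring through $C/\langle\tau\rangle$. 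The cubic relation on the image is then obtained by a dimension count: the $14$ elements $p_{3}^{-},\,e_{i}p_{2}\,(i=2,3,4)$, and the ten monomials $e_{2}^{i}e_{3}^{j}e_{4}^{k}$ all lie in the $13$--dimensional space $H^{0}(C,\Omega^{\otimes 3})^{-}$, forcing a linear dependence which, after eliminating $e_{1}^{2}$ via $p_{2}$, yields a cubic in $e_{2},e_{3},e_{4}$.

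Your approach instead packages the situation sheaf--theoretically via the double--cover decomposition $\pi_{*}\Omega_{C}\cong\Omega_{E}\oplus(\Omega_{E}\otimes L)$, computes $\deg L=3$ from Riemann--Hurwitz, and invokes very ampleness of $L$ on the genus--$1$ quotient. This is cleaner and more conceptual, and it gives for free that the image is a \emph{smooth} plane cubic and that $\phi$ has no base points (so $(1:0:0:0)\notin C$). The paper's argument, by contrast, is more elementary---only linear algebra with the defining polynomials $p_{2},p_{3}$---and has the practical advantage of giving an explicit recipe for writing down the cubic, which is the point for the authors' subsequent computations. Both are valid; yours trades computational explicitness for structural clarity.
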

\begin{remark}
The authors have computed the coefficients of the plane cubic as elements
of $\Q[z,t]$.
\end{remark}
\begin{proof}
If the coefficient of $e_{1}^{2}$ in $p_{2}$ is zero, then
$e_{1}$ can be solved for in terms of $e_{2}$, $e_{3}$ and $e_{4}$,
which makes the map $\phi : C \to \P^{2}$ have degree $1$. This 
is impossible, however, because $\phi(P) = \phi(\tau(P))$. It follows
that the coefficient of $e_{1}^{2}$ in $p_{2}$ is nonzero and this
implies that $\phi$ has degree $2$. 

We have that $\dim H^{0}(C,\Omega^{\otimes 2})^{+} = 7$ and is spanned
by $e_{1}^{2}$, $e_{2}^{2}$, $e_{3}^{2}$, $e_{4}^{2}$, $e_{2} e_{3}$,
$e_{2} e_{4}$ and $e_{3} e_{4}$. On the other hand $\dim
H^{0}(C,\Omega^{\otimes 2})^{-} = 3$ and is spanned by $e_{1} e_{2}$,
$e_{1} e_{3}$ and $e_{1} e_{4}$. Since the automorphism $\tau$ must
map $p_{2}$ to a $\lambda p_{2}$ for some $\lambda$, and since $p_{2}$
must be irreducible (since $C$ is contained in a unique quadric
surface), it follows that $\tau(p_{2}) = p_{2}$ (because if
$\tau(p_{2}) = -p_{2}$, then $e_{1}$ would be a factor of $p_{2}$). It follows
that $p_{2} = c_{1} e_{1}^{2} + c_{2} e_{2}^{2} + c_{3} e_{3}^{2} + c_{4} e_{4}^{2}
+ c_{5} e_{2} e_{3} + c_{6} e_{2} e_{4} + c_{7} e_{3} e_{4}$. This implies
that if $e_{2}$, $e_{3}$ and $e_{4}$ are all zero, then $e_{1} = 0$,
and so $\phi$ is a morphism. Since this morphism is invariant
under the action of $\tau$, it factors $C \to C/\langle \tau \rangle
\to \im \phi$. The map from $C/\langle \tau \rangle \to \im \phi$
has degree $1$, and must therefore be an isomorphism.

Decompose the polynomial $p_{3} = p_{3}^{+} + p_{3}^{-}$, where
$\tau(p_{3}^{\pm}) = \pm p_{3}^{\pm}$. Note that $p_{3}^{-}$ is
nonzero, since if $p_{3} = p_{3}^{+}$, then $e_{1} | p_{3}^{+}$ and
this implies that $C$ is reducible. Consider the $14$ polynomials
$p_{3}^{-}, e_{2} p_{2}, e_{3} p_{2}, e_{4} p_{2}$ and
$e_{2}^{i} e_{3}^{j} e_{4}^{k}$ with $i+j+k = 3$. All of
these lie in $H^{0}(C,\Omega^{\otimes 3})^{-}$, but $\dim H^{0}(C,\Omega^{\otimes 3})^{-}$ is $13$. It follows that these polynomials are linearly dependent.
This implies that the image of $\phi : C \to \P^{2}$ is a cubic curve,
as desired.
\end{proof}

\section{The elliptic curve $E/L$}

The assumption that $K$ is defined by $x^{5} + tx + t$ forces
$(0 : 1 : 0 : 0)$ to be a rational point on the curve $C$. The image of
this rational point on the plane cubic turns out to be $(0 : 1 : 0)$. 

By putting the plane cubic in Weierstrass form, we find that (for $t \ne -3125/256$),
\[
  E : y^{2} = x^{3} - 675\beta^{2} x - 79650\beta^{3},
\]
where
$\beta = \frac{16}{5}z^{9} + \frac{8}{5}z^{8} + \frac{4}{5} z^{7} -
3z^{6} - \frac{48}{5} tz^{5} - 40tz^{4} - 20tz^{3} +
\frac{13}{5}tz^{2} + (-64/5t^{2} + 29t)z + \frac{32}{5}t^{2}$.
(If $t = -3125/256$, then $x^{5} + tx + t$ has a repeated linear
factor.) This curve is a quadratic twist of
$E_{0} : y^{2} = x^{3} - 675x - 79650$, which has conductor $50$,
$j(E_{0}) = -25/2$, and is one of four elliptic curve over $\Q$ (up to
quadratic twist) with a rational $15$-isogeny. The extension
$L[\sqrt{\beta}]/L$ is a degree $2$ extension inside the splitting
field of $x^{5} + tx + t$. In particular, over the field $L[\sqrt{\beta}]$,
$x^{5} + tx + t$ has two linear factors.

\begin{proposition}
If $t \ne 0$, $t \ne 3125/144$, $E(L)$ has positive rank. 
\end{proposition}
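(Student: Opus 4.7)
The plan is to exhibit an explicit non-torsion point in $E(L)$ arising from a rational point on $C_{K}$. The trinomial $x^{5} + tx + t$ itself has $\alpha$ as a root, which gives the tautological element $\beta = \alpha \in K$ with coordinate tuple $(a,b,c,d,e) = (0,1,0,0,0)$. This tuple satisfies the compatibility $4te = 5a$ trivially ($0 = 0$), as well as both equations of Proposition~\ref{crveq}, so $P_{0} := (0:1:0:0)$ lies in $C_{K}(\Q)$.

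First I would push $P_{0}$ through the quotient map $\phi : C_{K} \to E$ constructed in Section~\ref{crv}. The image of $P_{0}$ on the plane cubic was already noted to be $(0:1:0)$; applying the Weierstrass change of coordinates then produces an explicit point $P \in E(L)$ whose coordinates are rational functions in $z$ and $t$.

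The heart of the proof is then to show $P$ has infinite order whenever $t \ne 0, 3125/144$ (and $t \ne -3125/256$, where the construction of $E$ degenerates). Since $j(E) = -25/2$ is constant in $t$, each admissible $E$ is a quadratic twist of the fixed curve $E_{0} : y^{2} = x^{3} - 675 x - 79650$ of conductor $50$, so $E(L)_{\rm tors}$ is uniformly bounded in $t$ (Merel); in fact the possible orders can be enumerated using the isogeny graph of $E_{0}$ together with $[L:\Q] \le 10$. For each such candidate order $n$, substituting the coordinates of $P$ into the $n$-th division polynomial $\psi_{n}$ of $E$ produces a polynomial $F_{n}(t) \in \Q[t]$ whose vanishing is the condition that $P$ is $n$-torsion. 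The proof reduces to factoring each $F_{n}$ and checking that the only common rational roots are $t = 0$ and $t = 3125/144$, where $P$ indeed collapses onto a small-order torsion point.

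The main obstacle is the sheer bulk of the explicit algebra: the matrix entries for the involution $\tau$, the equations of the plane cubic, and the Weierstrass change of variables all live in $\Q[z,t]$ with unwieldy coefficients (as the authors have already flagged), so writing $P$ down and running the division-polynomial test are feasible only inside a computer algebra system. Once $P$ is in hand, however, verifying non-torsion reduces to a finite and routine factorization.
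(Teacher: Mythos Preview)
There is a genuine gap at the very first step. The Weierstrass model of $E$ in the paper is obtained from the plane cubic by sending the known rational point $(0:1:0)$ --- which is exactly the image of your $P_{0}=(0:1:0:0)$ under the quotient map --- to the origin. Thus $\phi(P_{0})$ is the identity element of $E(L)$, not a candidate for a point of infinite order. This is precisely why the paper does \emph{not} use $\phi(P_{0})$ directly: instead it takes the tangent line to the plane cubic at $(0:1:0)$, picks off the residual intersection point $Q$ (which exists exactly when $(0:1:0)$ is not a flex, i.e.\ when $t\ne 3125/144$), and lets $P$ be the image of $Q$ on $E$. That tangent--chord step is the missing idea in your proposal; without it you have produced only the trivial point.

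Your plan for ruling out torsion, once a genuine point is in hand, is workable but heavier than necessary. Because every $E$ in the family is a quadratic twist of the fixed curve $E_{0}:y^{2}=x^{3}-675x-79650$ and becomes isomorphic to $E_{0}$ over the degree-$20$ field $L[\sqrt{\beta}]$, the paper bounds possible torsion orders by analysing the mod~$\ell$ Galois representations of $E_{0}$: for $\ell>5$ these are surjective, so an $\ell$-torsion point would force a field of degree divisible by $\ell^{2}-1>20$. This reduces the check to orders $3$ and $5$, each handled by a single explicit computation in $t$. Invoking Merel and testing all division polynomials up to the Merel bound would get there too, but the paper's route exploits the constant $j$-invariant to avoid any uniform boundedness theorem and keeps the verification to two small cases.
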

\begin{proof}
If $t = 3125/144$, the point $(0 : 1 : 0)$ is a flex on the plane cubic
found in the previous section. Otherwise, there is a second point $Q$
on the tangent line to the plane cubic at $(0 : 1 : 0)$. Call $P$ the image
of this point on $E$. A straightforward computation shows that 
if $3P = (0 : 1 : 0)$ or $5P = (0 : 1 : 0)$, then $t = 0$ or $3125/144$.
We will show that in all other cases, $P$ has infinite order.

Because $E$ is isomorphic to $E_{0} : y^{2} = x^{3} - 675x - 79650$
over $L[\sqrt{\beta}]$, if $P$ has order $k$, then $E_{0}$ must have a
point of order $k$ defined over a degree $20$ extension. Consider the
mod $\ell$ Galois representation attached to $E_{0}$. If this
representation is surjective, then any field over which $E_{0}$
acquires an $\ell$-torsion point must have degree a multiple of
$\ell^{2} - 1$. This is $> 20$ for any $\ell > 5$. Andrew Sutherland
has verified the surjectivity of the mod $\ell$ Galois representation
attached to $E_{0}$ for $5 < \ell < 80$ (see \cite{Sutherland}).
Moreover, it is known (by work of Serre, see \cite{Serre72}) that if
$\ell > 37$ and the mod $\ell$ Galois representation is not
surjective, there is a quadratic charater $\chi$ unramified at primes
not dividing $50 = N(E_{0})$ for which $a_{p} \equiv 0 \pmod{\ell}$
for any $p \nmid N$ for which $\chi(p) = -1$. It is easy to see (by
considering $p = 3$, $7$ and $11$) that no such $\chi$ exists. Thus,
the mod $\ell$ Galois representation is surjective if $\ell > 5$. It
follows that $P$ is not a torsion point unless $t = 0$ or
$t = 3125/144$.
\end{proof}

The above results raises the hope that for a generic value of $t$,
the rank of $E(L)$ might be one. This is (probably) false for many values of
$t$ as evidenced by the following result.

\begin{proposition}
Suppose that $K/\Q$ is unramified at $2$ and $5$ and the Galois
of $x^{5} + tx + t$ is isomorphic to $F_{20}$, $A_{5}$ or $S_{5}$. 
Then the root number of $w_{E/L} = 1$.
\end{proposition}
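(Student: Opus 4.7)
The plan is to factor the root number as a product of local contributions
$w_{E/L} = \prod_v w_v(E/L_v)$
over the places $v$ of $L$, and to show that the product is $+1$. Each archimedean place of $L$ contributes $-1$, and each non-archimedean place of good reduction contributes $+1$, so only the archimedean places and the finitely many places of bad reduction need genuine analysis.

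First I would use the hypothesis that $K/\Q$ is unramified at $2$ and $5$ to deduce the same for $L/\Q$. The field $L$ is a subfield of the Galois closure of $K/\Q$, corresponding to the stabilizer (in the Galois group, which by assumption is $F_{20}$, $A_5$ or $S_5$) of an unordered pair of roots of $x^5+tx+t$; the Galois closure has the same set of ramified primes as $K/\Q$, so $L/\Q$ is also unramified at $2$ and $5$. Since $E$ is a quadratic twist of $E_0 : y^2 = x^3 - 675 x - 79650$ (of conductor $2 \cdot 5^2$) by $\beta \in L$, the bad places of $E/L$ then lie above $2$ and $5$, modulo checking that $\beta$ has even valuation at every place of $L$ away from these primes, which should follow from the explicit formula for $\beta$ given earlier.

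Second, I would determine the archimedean contribution $(-1)^{r_1(L)+r_2(L)}$ by enumerating the possible signatures of $K$ (controlled by the Galois group and by the number of real roots of $x^5+tx+t$) and reading off the signature of $L$ from the action of complex conjugation on the ten unordered pairs of roots.

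Third, at each place $v$ above $2$ or $5$ I would compute $w_v(E/L_v)$ using Rohrlich's formulas for local root numbers of elliptic curves. At $v \mid 2$, the reduction of $E_0$ is multiplicative, so the local root number reduces to a Kronecker-symbol-style calculation that depends on $\beta$ locally at $v$. At $v \mid 5$ the reduction is additive and potentially good, so the local root number depends more delicately on the local twisting character $\chi_\beta$ cutting out $L_v(\sqrt{\beta})/L_v$ and on the local Galois representation attached to $E_0$. Multiplying these with the archimedean sign and the trivial factors at the remaining primes should yield $w_{E/L} = +1$.

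The main obstacle will be the case analysis at the primes above $5$, where the interplay between the additive-reduction local invariants of $E_0$ and the local behavior of $\beta$ has to be tracked across every combination of Galois group ($F_{20}$, $A_5$, $S_5$), signature of $K$, and splitting type of $5$ in $L$; the archimedean and $2$-adic pieces, while tedious, should be more routine.
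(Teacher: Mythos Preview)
Your approach is genuinely different from the paper's, and it has a real gap.

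The paper does \emph{not} compute local root numbers of $E/L$ directly. Instead it passes through Artin formalism: writing $L(E/L,s)$ as a ratio of $L(E_{0}/L[\sqrt{\beta}],s)$ and $L(E_{0}/L,s)$, and then factoring into pieces $L(E_{0}\otimes\rho,s)$ for irreducible Artin representations $\rho$ of $\Gal(\overline{\Q}/\Q)$ that occur in the permutation representations on the cosets. The hypothesis ``$K/\Q$ unramified at $2$ and $5$'' is used only to guarantee that the Artin conductors of these $\rho$ are coprime to $N(E_{0})=50$, so that Dokchitser's global formula for $w_{{\rm tw}_{\rho}}(E_{0})$ applies. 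The root number then falls out from symmetry: in the $S_{5}$ case the two relevant representations $\rho_{3}$ and $\rho_{7}$ have the same (nontrivial) determinant, so their root-number factors cancel; in the $A_{5}$ case the determinants are trivial; in the $F_{20}$ case one pair of factors are complex conjugates and the remaining factor appears squared. No place-by-place computation is needed.

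The gap in your plan is the claim that the bad places of $E/L$ lie only above $2$ and $5$. The twist of $E_{0}$ by $\beta$ acquires additive reduction at every place of $L$ where the quadratic character $\chi_{\beta}$ is ramified, and $L[\sqrt{\beta}]/L$ is a subextension of the Galois closure of $K/\Q$, which is typically ramified at primes dividing $256t+3125$ (the nontrivial part of the discriminant of $x^{5}+tx+t$). For instance, in the $S_{5}$ case, at a prime where inertia is generated by a transposition swapping the two distinguished roots, the prime of $L$ corresponding to that pair ramifies in $L[\sqrt{\beta}]/L$. So your expectation that ``$\beta$ has even valuation at every place of $L$ away from $2$ and $5$'' is generically false. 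You would need to add local root-number contributions at all of these extra places, and then show that they cancel against whatever is happening at the archimedean places. That may be doable in principle (for quadratic twists of a curve with good reduction, the local root number at a ramified place is governed by the local constant of $\chi_{\beta}$, and there is a global product formula for these), but it is exactly the kind of bookkeeping the Artin-representation argument avoids entirely.
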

\begin{remark}
If $t = -45/4$, then the Galois group of $x^{5} + tx + t$ is isomorphic to
$S_{5}$ and there are at least $5$ trinomials with a root in $K$. The number
field $K/\Q$ is ramified at $2$ and $5$, and the root number $w_{E/L} = -1$,
showing that the hypothesis that $2$ and $5$ are unramified in $K/\Q$ is
necessary.  
\end{remark}

Assuming that the $L$-function of $E/L$ has an analytic continuation and
a functional equation with root number $w_{E/L} = 1$, the Birch and Swinnerton-Dyer conjecture predicts that the rank of $E(L)$ is even. 

\begin{proof}
We will use Corollary~2 of \cite{VDok} which states that if $\rho$ is a
self-dual Artin representation and $E$ is
an elliptic curve defined over $\Q$ with conductor coprime to that of $\rho$,
then
\[
  w_{{\rm tw}_{\rho}}(E) = w_{E}^{\dim \rho} {\rm sign}(\alpha_{p})
  \legen{\alpha_{p}}{N},
\]
where $\Q(\sqrt{\alpha_{p}})$ is the fixed field of $\det \rho$. Suppose
$E/\Q$ is an elliptic curve and $K/\Q$ is a number field with
$K = \Q[\alpha]$, where $\alpha$ has the minimal polynomial $f(x)$.
Then we have $L(E/K,s) = L(E \otimes \rho, s)$, where
$\rho : \Gal(K/\Q) \to S_{n} \to \GL_{n}(\C)$ is the Artin
representation obtained from the permutation character giving the
action of $\Gal(K/\Q)$ on the roots of $f$. We apply this
with $E_{0} : y^{2} = x^{3} - 675x - 79650$. We get that $L(E,s)
= L(E_{0}/L[\sqrt{\beta}],s)/L(E_{0}/L,s)$.

Suppose the Galois group of $x^{5} + tx + t$ over $\Q$ is $S_{5}$. In
this case,
$L(E,s) = L(E_{0} \otimes \rho_{3}, s) L(E_{0} \otimes \rho_{7}, s)$,
where $\rho_{3}$ is the irreducible $4$-dimensional representation
with $\tr \rho_{3}(\sigma) = -2$ when $\sigma$ is a
transposition. Here $\rho_{7}$ is the irreducible $6$-dimensional
representation. Both $\det \rho_{3}$ and $\det \rho_{7}$ are
non-trivial, but there is a unique quadratic subfield of the Galois
closure, and so each of $L(E_{0} \otimes \rho_{3}, s)$ and
$L(E_{0} \otimes \rho_{7}, s)$ has root number
\[
  \sign(\alpha_{p}) \legen{\alpha_{p}}{N}.
\]
Thus, their product has root number $1$, so this is the root number of
$L(E,s)$. 

In the $A_{5}$ case, the representations $\rho_{3}$ and $\rho_{7}$
have trivial determinant (since there are no quadratic subfields) and
are no longer irreducible. However, the same argument applies. In
the case that the Galois group is $F_{20}$, the $L$-function factors as
$L(E_{0} \otimes \rho_{1}, s) L(E_{0} \otimes \overline{\rho}_{1}, s)
L(E_{0} \otimes \rho_{2}, s)^{2}$. Here $\rho_{1}$ and $\overline{\rho}_{1}$
are linear and the product of $L(E_{0} \otimes \rho_{1}, s) L(E_{0} \otimes \overline{\rho}_{1}, s)$ has root number $1$. The representation $\rho_{2}$ is self-dual
and so $L(E_{0} \otimes \rho_{2},s)^{2}$ has root number $1$.
\end{proof}

The final part of Theorem~\ref{main} remains to be established. We have
the map $\phi : C_{K} \to E$. The easiest way to find a map $\rho : C_{K}
\to \P^{1}$ which can be written as
$\rho = \psi \circ \phi$ is to have $\rho$ be the quotient map by
the entire automorphism group of $C_{K}$. As mentioned above
$C_{K}$ is isomorphic to $B$, and it easy to see that the
quotient map $B \to B/{\rm Aut}~B$ is given by
$\xi : B \to \P^{1}$, where $\xi(x_{1} : x_{2} : x_{3} : x_{4} : x_{5})
= (\alpha^{5} : \beta^{4})$, where
\[
  (x - x_{1}) (x - x_{2}) (x - x_{3}) (x - x_{4}) (x - x_{5})
  = x^{5} + \alpha x + \beta.
\]
Composing $\xi$ with the isomorphisms mapping $B$ to and from $C_{K}$
gives that
$\rho : C_{K} \to \P^{1}$ is given by $\phi(a : b : c : d) = (\gamma^{5} : \delta^{4})$, where $x^{5} + \gamma x + \delta$ is the minimal polynomial of
$\beta = a + b \alpha + c \alpha^{2} + d \alpha^{3} + e \alpha^{4}$ (in
the notation of Section~\ref{crv}). This map $\rho$ has degree $120$,
and is the quotient map $C_{K} \to C_{K}/{\rm Aut}~C_{K}$. Therefore,
we can factor it as
$C_{K} \to C_{K}/\langle \tau \rangle \to C_{K}/{\rm Aut}~C_{K} \cong \P^{1}$,
and the map $\psi : C_{K}/\langle \tau \rangle \to \P^{1}$ is the degree
$60$ map we seek. This concludes the proof of Theorem~\ref{main}.

\section{Examples}

In this section, we prove Theorem~\ref{examples}.
\begin{proof}
The polynomial $x^{5} - 5x + 12$ is equivalent to $x^{5} + tx + t$,
where $t = -3125/20736$. The Galois group of $x^{5} - 5x + 12$ is
$D_{10}$, and so $L$, the smallest field over which $f(x)$ factors
as the product of a quadratic and a cubic is $K = \Q[z]$,
where $z^{5} - 5z + 12 = 0$. The elliptic curve $E$ is isomorphic to
$y^{2} = x^{3} - 675x - 79650$ over $K[\sqrt{-10}]$, the splitting
field of the quintic. It follows that $E$ is the $-10$ quadratic twist
of $E_{0}$, and our model for $E$ can be given by
$E : y^{2} = x^{3} - x^{2} - 833x + 109537$. Magma's routines for doing
$2$-descents on curves over number fields determines that
the rank of $E$ is at most $2$, and finds two generators
(one with $x = 10z^{3} + 10z^{2} - 30z + 47$ and the
second with $x = -10z^{3} - 10z^{2} + 30z - 13$). 

The most difficult part of
of the computation is determining equations for the map
$\psi : E \to \P^{1}$. We have in hand equations
for the map $\rho : C_{K} \to \P^{1}$ and equations for the map from
$C_{K} \to E$ and using this we can create a number of pairs
$(P, Q)$ where $P \in E(K)$ and $Q = \psi(P) \in \P^{1}(K)$
by taking a point $P \in E(K)$, computing the two preimages of $P$
on $C_{K}$, and computing the image of one of these points under $\rho$.

Based on a few examples, we observe that
$\psi(-P) = \psi(P)$ and therefore, we expect that
$\psi(x,y)$ has the form $\frac{A(x)}{B(x)}$, where $\deg A, B \leq 30$.
We use linear algebra to compute $A(x)$ and $B(x)$ and we verify
that $\psi(x,y) = \frac{A(x)}{B(x)}$ by checking that their values
agree on $121$ different points in $E(K)$. The ratio of these two
functions is a map from $E \to \P^{1}$ of degree at most $120$,
and since this function takes on the value $1$ at $121$ different points,
we have that $\psi(x,y) = \frac{A(x)}{B(x)}$.

At this point,
we use Nils Bruin's Magma implementation of elliptic curve Chabauty
to find all points $R \in E(K)$ with $\psi(R) \in \P^{1}(\Q)$. There
are five such points in $E(K)$, and the preimages of these points
give eight points in $C_{K}(K)$. The only one of these points
that is rational is $(0 : 1 : 0 : 0)$, and this corresponds to
$x^{5} - 5x + 12$. 

Now, we consider the case where $K = \Q[\sqrt[5]{18}]$. The polynomial
$x^{5} + 750x + 3750$ is equivalent to $x^{5} + (6/5)x + (6/5)$ and so
we take $t = 6/5$ in the calculations in the previous section. The
field $L$ is $K[\sqrt[5]{18}, \sqrt{5}]$. The elliptic curve $E$ is
the quadratic twist of $E_{0}$ by $\frac{-5+\sqrt{5}}{2}$. Computing
the $2$-Selmer group of $E$ over $L$ requires knowing the class group
of a degree $3$ extension of $L$, and we are able to determine this
only by assuming GRH. A 140-hour computation in Magma determines that
the $2$-Selmer rank of $E/L$ is $2$. Most of this time is a systematic search
for $2$-adic points on the curve. There are $5$ rational points
on $C_{K}$, and their images on $E$ generate a subgroup of rank $1$.
Without a second point of infinite order, we cannot apply elliptic curve
Chabauty. Directly point searching on $E$ is not likely to be effective,
and explicitly computing the $2$-covers of $E$ is not feasible computationally.

By taking the curve $C_{K}$ and computing its automorphism group
modulo several primes, we are led to suspect that $C_{K}$ has $8$
automorphisms defined over $L$. This can be confirmed using the method
in \cite{RouseDZB}, Subsection 7.5. Applying these automorphisms to
the points on $C_{K}$ give $16$ points in $C_{K}(L)$ (in two
orbits). The images of these points on $E$ generate a subgroup of rank
$2$. Notably, the points on $C_{K}$ corresponding to trinomials with
$t = 0$ are torsion points on $E$, and
$E(L) \cong \Z \times \Z \times \Z/5\Z$.  To compute the map
$\psi : E \to \P^{1}$ we suppose that it has the form
$\frac{A(x) + yB(x)}{C(x)}$, where $\deg A, C \leq 60$ and
$\deg B \leq 58$. This time, we parametrize the quadric surface containing
$C_{K}$ and using this parametrization create points on $C_{K}$ over
cubic extensions of $L$. To find the coefficients of $A(x)$, $B(x)$
and $C(x)$, we exactly solve a $185 \times 181$ linear system (over
$L$). Solving this system in Magma takes $145$ hours and the list of
the resulting coefficients requires more than 1 megabyte.  As in the
previous case, we verify that $\frac{A(x) + yB(x)}{C(x)}$ is equal to
$\psi$ by verifying that $\frac{A(x) + yB(x)}{C(x)}$ has degree $60$
and that these two maps give the same values on $121$ different
points on $E$.

At this point we can apply elliptic curve Chabauty, which requires
only $127$ seconds to run.  We find that there are nine points in
$E(L)$ that have rational image under $\psi$, and there are $16$
points in $C_{K}(L)$ that map to these. Of these, only $5$ are
rational. These are $(0 : 1 : 0 : 0)$, corresponding to
$x^{5} + (6/5)x + (6/5)$, $(-168/55 : 9/11 : 19/11 : 1)$,
corresponding to $x^{5} - 18$, $(36/35 : -30/7 : 24/7 : 1)$,
corresponding to $x^{5} - 432$, $(-22/15 : -7/6 : -5/4 : 1)$,
corresponding to $x^{5} - 324$, and $(-8/65 : 20/39 : -16/39 : 1)$,
corresponding to $x^{5} - 24$.
\end{proof}

\section{Number fields with more than one trinomial defining them}

Taking the equation of the curve from Proposition~\ref{crveq} and eliminating
the parameter $t$ yields the surface
\begin{align*}
  X : & 20a^{3} c d^{2} + 15a^{3}d^{3} + 128a^{2}b^{2}d^{2} + 128a^{2}bc^{2}d + 
240a^{2}bcd^{2} - 100a^{2}bd^{3} + 32a^{2}c^{4}\\ 
&+ 320a^{2}c^{3}d + 700a^{2}c^{2}d^{2} + 250a^{2}cd^{3} - 128ab^{3}cd - 480ab^{3}d^{2} - 64ab^{2}c^{3}\\ 
&- 720ab^{2}c^{2}d - 600ab^{2}cd^{2} - 500ab^{2}d^{3} - 160abc^{4} - 600abc^{3}d - 1500abc^{2}d^{2}\\ 
&- 2500abcd^{3} + 400ac^{5} + 2000ac^{4}d + 2500ac^{3}d^{2} + 1280b^{4}cd + 1600b^{4}d^{2} + 640b^{3}c^{3}\\ 
&+ 4000b^{3}c^{2}d + 2000b^{3}cd^{2} + 800b^{2}c^{4} + 2000b^{2}c^{3}d = 0.
\end{align*}
If $(a : b : c : d)$ is a point on $X$, and $\alpha$ is a root 
of $x^{5} + tx + t$, where $t = \frac{5a^{2} - 50ab}{32bd + 16c^{2} + 40cd}$,
then
\[
  a + b \alpha + c \alpha^{2} + d \alpha^{3} + e \alpha^{4}
\]
is also a root of a trinomial in $\mathbb{Q}[\alpha]$, where
$e = \frac{5a}{4t}$.

The surface $X$ is (very) singular. It has four lines on it:
\begin{itemize}
\item $a = 10b$, $c = -(3/5)b$. These points correspond to $t = 0$.
\item $a = b = 0$. These also correspond to $t = 0$.
\item $b = (21/32)d$, $c = (-3/4)d$. These correspond to $t = \infty$.
\item $a = (125/16)d$, $c = (-5/4)d$. These correspond to
$t = -3125/256$ (for which $x^{5} + tx + t$ is reducible).
\item $c = d = 0$. These points are singular.
\end{itemize}
The family where $d = tc$ is a fibration of $X$ by curves that (for
all but finitely many $t$) have genus $2$. Viewing $X$ as a
hyperelliptic curve over $\Q(t)$, we find that it has a model in the
form $y^{2} = f(x,t)$, and $y = 0$ is a non-hyperelliptic genus $4$
curve. This implies that a non-singular model $\tilde{X}$ of $X$
is a K3 surface. This implies in addition that $X$ has an automorphism
of order $2$ defined over $\Q$.

In addition, the surface has at least seven rational curves:
\begin{itemize}
\item $a = -\frac{3}{100}s^{4} - \frac{1}{5}s^{3} + s^{2}$,
$b = \frac{3}{100}s^{3} + \frac{1}{5} s^{2} - s$, $c = 0$,
$d = \frac{32}{125}s^{2} + \frac{24}{25}s + \frac{16}{5}$. 
\item $a = \frac{7}{2000}s^{4} + \frac{1}{100}s^{3} + \frac{1}{4}s^{2} + s$,
$b = -\frac{7}{2000} s^{3} - \frac{1}{100}s^{2} - \frac{1}{4}s - 1$,
$d = \frac{8}{625}s^{2} + \frac{2}{125}s - \frac{4}{25}$, $c = -\frac{5}{2}d$.
\item $a = -\frac{1}{250} s^{3} + \frac{2}{25} s^{2} - \frac{1}{2} s + 1$,
$b = -\frac{1}{250} s^{2} + \frac{1}{10}$, $d = -\frac{32}{625}s + \frac{16}{125}$, $c = -\frac{5}{4}d$. This curve is the one that appears in Theorem~\ref{family}.
\item $a = 0$, $b = -\frac{1}{2} s^{2} - \frac{5}{4}s$, $c = s$, $d = 1$. 
\item $a = -5s^{2} - \frac{25}{2}s$,
$b = -\frac{1}{2} s^{2} - \frac{5}{4} s$, $c = s$, $d = 1$. This
curve is contained in the singular locus of $X$.
\item The image on $X$ of the family of trinomials defining
$\Q[\left(r^{3} (r+1)(r-1)^{4}\right)^{1/5}]$ given in \cite{SW2}. For this
curve, $c$ and $d$ have degree $25$, while $a$ and $b$ have degree $24$. 
\item The image of the previous curve under the automorphism. For
this curve, $a$, $b$, $c$ and $d$ all have degree $49$.
\end{itemize}

We conclude by raising a number of questions:
\begin{enumerate}
\item What are equations for $\tilde{X}$?
\item What is the rank of the N\'eron-Severi group of $\tilde{X}$?
\item Above we give five rational curves on $X$. It is conjectured that there
are infinitely many rational curves (defined over $\C$) on any K3 surface. 
Is this true for $\tilde{X}$? Are the $\Q$-rational points on $\tilde{X}$ Zariski dense?
\item Are there infinitely many number fields $K$ that contain roots of
three inequivalent trinomials?
\item Are there infinitely many number fields $K/\Q$ whose Galois closure
has Galois group $D_{10}$ that contain the roots of two trinomials? (We know
of $3$ examples.)   
\end{enumerate}

\bibliographystyle{plain}
\bibliography{triref}

\end{document}